\renewcommand{\le}{\leqslant}
\renewcommand{\ge}{\geqslant}
\renewcommand{\leq}{\leqslant}
\renewcommand{\geq}{\geqslant}
\newcommand{\lda}{\lambda}
\newtheorem{theorem}{Theorem}
\newtheorem{lemma}{Lemma}
\newtheorem*{mainlemma}{The Main Lemma}
\theoremstyle{definition}
\newtheorem{definition}{Definition}
\newtheorem{remark}{Remark}
\begin{document} 

\title{On the asymptotics of integrals related to the generalized Cantor ladder}
\author{A.I.~Nazarov\footnote{St.Petersburg State University, Faculty of Mathematics and Mechanics.}, 
N.V.~Rastegaev\footnote{St.Petersburg State University, Faculty of Mathematics and Mechanics,
P.L. Chebyshev laboratory.}}
\maketitle

\abstract{The Cantor ladder is naturally included into various families
of self-similar functions. In the frame of these families
we study the asymptotics of some parametric integrals.}

\section{Introduction}
%\begin{rusdefinition}
Let $\{I_k = [a_k,b_k]\}_{k=1}^m$ be subsegments of $[0,1]$ with non-intersecting interiors.
Denote by $S_k(t) = a_k + (b_k-a_k)\,t$ the affine contractions of $[0,1]$ onto $I_k$ preserving the
orientation. We also introduce a set of positive numbers
 $\{\rho_k\}_{k=1}^m$ such that \mbox{$\sum\limits_{k=1}^m\rho_k = 1$}. 

Define the operator $\mathcal{S}$ acting in the space $L_{\infty}(0,1)$ by the formula
\[\mathcal{S}(f) = \sum_{k=1}^m\left(\chi_{I_k}(f\circ S_k^{-1})+\chi_{\{x>b_k\}}\right)\rho_k. \]
It is easy to check, see, e.g., \cite{Sh}, that $\mathcal{S}$ is a contracting map in $L_{\infty}(0,1)$.
Thus, there exists a unique function $C\in L_{\infty}(0,1)$ such that $\mathcal{S}(C)=C$.

We call such a function $C(t)$ \textit{the generalized Cantor ladder} with $m$ steps. 
It can be found as a uniform limit of the sequence $\mathcal{S}^k(f)$ with $f(t)\equiv t$. 
This allows to assume $C(t)$ continuous and monotone with $C(0)=0$, $C(1)=1$. 

Note that the derivative of $C(t)$ in the sense of distributions is a measure $\mu$ self-similar in
the sense of Hutchinson (see \cite{H}). This means
\[\mu(E) = \sum_{k=0}^m \rho_k \mu(S_k^{-1}(E\cap I_k)).\]
More general self-similar functions are described in \cite{Sh}.
%\end{rusdefinition}

For a generalized Cantor ladder $C(t)$ we study the asymptotic behavior, as $\lda\to\infty$, of the 
integral
\[ E(\lda) = \int\limits_0^1e^{\lda C(t)} \; dt.\]

\begin{remark}
It is easy to see that the quest of asymptotics of $E(\lda)$ as $\lda\to-\infty$ can be reduced to a
similar problem as $\lda\to+\infty$.

Namely, let a ladder $C(t)$ be generated by segments $I_k = \left[a_k, b_k\right]$, $k=1,\dots,m$,
and by numbers $\{\rho_k\}_{k=1}^m$. Consider the ladder $C_1(t)$ generated by segments 
$\{J_k\}_{k=1}^m$ with $J_{m-k+1} = \left[1-b_k, 1-a_k\right]$ and numbers $\sigma_k = 1-\rho_{m-k+1}$,
$k=1,\dots,m$. For these ladders we have an obvious relation
\[ E_C(-\lda) = e^{-\lda} E_{C_1}(\lda). \]
Thus the quest of asymptotics of $E_C(\lda)$ as $\lda\to-\infty$ can be reduced to the quest of
asymptotics of $E_{C_1}(\lda)$ as $\lda\to+\infty$. In what follows we assume $\lambda>0$.
\end{remark}

\begin{definition}
We say that a generalized Cantor ladder is \textit{regular} if 
\[ \forall k=2,\dots, m \qquad \rho_k =\rho_1 = \frac{1}{m}, \quad 
b_k - a_k=b_1 - a_1, \quad a_k - b_{k-1}=a_2 - b_1. \]
For $a_2 = b_1$ such a ladder degenerates to $C(t)\equiv t$, and we have $E(\lda) = \frac{e^\lda-1}{\lda}$.
\end{definition}

The regular ladder for $m=2$ was considered in the paper \cite{GK}. In particular, the first term of the
asymptotic series for $E(\lda)$ was calculated. We also mention the paper \cite{G} where the function
$E(\lda)$ and some other integrals were expressed (in the case of classical Cantor ladder) in terms of
series of elementary functions.

\section{The recurrent relation and the Main Lemma}

Without loss of generality we can assume $a_1 = 0$, $b_m = 1$ (any another case can be reduced to this
one by dilation). Denote by $\Delta_i$, $i = 1\ldots 2m-1$, the lengths of parts of the segment $[0,1]$, 
i.e. $\Delta_{2k-1} = b_k - a_k >0$, $\Delta_{2k} = a_{k+1} - b_k\ge0$. We define also 
$h_k = \sum\limits_{i=1}^k \rho_i$, $g_k = 1 - h_k$.

\begin{remark}
The relation $\mathcal{S}(C)=C$ can be rewritten as follows:
\begin{equation*}
C(t) =
\left\{
\begin{array}{ll}
h_{k-1} + \rho_k C\left(\frac{t-a_k}{\Delta_{2k-1}}\right), & t \in [a_k, b_k]\\
h_k, & t \in [b_k, a_{k+1}]\\
\end{array} 
\right.
\end{equation*} 
\end{remark}

\begin{lemma}\label{lemma_main_eq}
For a ladder with $m$ steps the following relation holds:
\begin{equation}\label{main_eq}
E(\lda) = \Delta_1 E(\rho_1\lda) + \Delta_2 e^{h_1\lda} + %\Delta_3 e^{h_1\lda}E(\rho_2\lda) + \Delta_4 e^{h_2\lda} + 
\ldots + \Delta_{2m-1}e^{h_{m-1}\lda}E(\rho_m\lda).
\end{equation}
\end{lemma}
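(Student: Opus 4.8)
The plan is to compute $E(\lda)$ directly by splitting the integral over the natural partition of $[0,1]$ into the step-segments $[a_k,b_k]$ and the flat segments $[b_k,a_{k+1}]$, then use the piecewise description of $C(t)$ from the Remark to evaluate each piece. Concretely, I would write
\[
E(\lda) = \sum_{k=1}^m \int_{a_k}^{b_k} e^{\lda C(t)}\,dt + \sum_{k=1}^{m-1} \int_{b_k}^{a_{k+1}} e^{\lda C(t)}\,dt,
\]
so that the two families of terms correspond exactly to the odd-indexed lengths $\Delta_{2k-1}$ (step parts) and the even-indexed lengths $\Delta_{2k}$ (flat parts).

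On each flat segment $[b_k,a_{k+1}]$ the Remark gives $C(t)\equiv h_k$, so the integrand is the constant $e^{h_k\lda}$ and the integral is simply $\Delta_{2k}\,e^{h_k\lda}$. This immediately produces the terms $\Delta_2 e^{h_1\lda},\dots$ appearing in \eqref{main_eq}. On each step segment $[a_k,b_k]$ I would substitute $t = a_k + \Delta_{2k-1}\,s$ (equivalently $s = \frac{t-a_k}{\Delta_{2k-1}}$), which maps $[a_k,b_k]$ onto $[0,1]$ with $dt = \Delta_{2k-1}\,ds$. Using $C(t) = h_{k-1} + \rho_k\,C(s)$ from the Remark, the integral becomes
\[
\int_{a_k}^{b_k} e^{\lda C(t)}\,dt = \Delta_{2k-1}\, e^{h_{k-1}\lda} \int_0^1 e^{\rho_k\lda\,C(s)}\,ds = \Delta_{2k-1}\, e^{h_{k-1}\lda}\, E(\rho_k\lda),
\]
where the last equality is just the definition of $E$ with parameter $\rho_k\lda$. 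Recognizing $h_0 = 0$ (so the $k=1$ term carries no exponential prefactor, giving $\Delta_1 E(\rho_1\lda)$) and $h_{m} $ irrelevant, these are precisely the remaining terms in \eqref{main_eq}.

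The only genuine point requiring care is the self-similarity rescaling: one must verify that the substitution correctly turns $\int_0^1 e^{\rho_k\lda\,C(s)}\,ds$ into $E(\rho_k\lda)$, i.e.\ that the \emph{same} function $C$ reappears at the smaller scale. This is exactly the content of the fixed-point identity $\mathcal{S}(C)=C$ recorded in the Remark, so no independent work is needed beyond invoking it. I do not expect any serious obstacle here; the lemma is essentially a bookkeeping consequence of the piecewise-affine self-similar structure, and the main thing to track carefully is the index shift $h_{k-1}$ versus $h_k$ and the convention $h_0=0$. Summing the two displayed families of terms over $k$ and collecting them in order of increasing index reproduces \eqref{main_eq} exactly.
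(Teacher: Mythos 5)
Your proposal is correct and follows essentially the same route as the paper: split $E(\lda)$ over the step segments $[a_k,b_k]$ and flat segments $[b_k,a_{k+1}]$, use the fixed-point description of $C(t)$ from the Remark, and rescale each step segment to $[0,1]$ to recover $E(\rho_k\lda)$ with the prefactor $\Delta_{2k-1}e^{h_{k-1}\lda}$. Your handling of the substitution and the conventions $h_0=0$, $\Delta_{2k}e^{h_k\lda}$ for the flat parts matches the paper's computation exactly.
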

\begin{proof}
\begin{align*}
E(\lda) &= \int\limits_0^1e^{\lda C(t)} \; dt =
\sum\limits_{k=1}^m \int\limits_{a_k}^{b_k} e^{\lda C(t)} \; dt +
\sum\limits_{k=1}^{m-1} \int\limits_{b_k}^{a_{k+1}} e^{\lda C(t)} \; dt =\\
&= \sum\limits_{k=1}^m e^{h_{k-1}\lda}\int\limits_{a_k}^{b_k} e^{\rho_k\lda C\left(\frac{t-a_k}{\Delta_{2k-1}}\right)} \; dt +
\sum\limits_{k=1}^{m-1} \Delta_{2k} e^{h_k\lda} = \\
&= \sum\limits_{k=1}^m \Delta_{2k-1} e^{h_{k-1}\lda} E(\rho_k\lda) +
\sum\limits_{k=1}^{m-1} \Delta_{2k} e^{h_k\lda},
\end{align*}
and we arrive at \eqref{main_eq}.
\end{proof}

To analyse this relation we need the following statement.

\begin{mainlemma}\label{main_lemma}
Let the function $F(\lda)$, $\lda \geq 0$, satisfy the following conditions:
\begin{enumerate}
\item $1\leqslant F(\lda) \leq e^\lda$;
\item $F(\eta\lda) = d \, e^{(\eta-1)\lda} F(\lda) + f(\lda), \quad 0 < d < 1, \; \eta > 1$;
\item $f(\lda) = O(e^{(\eta-\varepsilon)\lda})$ as $\lda \to \infty$, for some $\varepsilon > 0$.
\end{enumerate}
Then, as $\lda \to \infty$, the asymptotic relation
\[ F(\lda) = \Phi(\log_\eta(\lda)) \lda^\alpha e^\lda + O(e^{(1-\varepsilon)\lda})\]
holds with $\alpha = \log_\eta(d)<0$ and $1$-periodic function $\Phi$.
\end{mainlemma}
\begin{remark}
In a particular case this statement was proved in \cite{GK}.
\end{remark}
\begin{proof}
We introduce the notation
\[ F_1(\lda) = \dfrac{F(\lda)}{\lda^{\alpha}e^{\lda}},\quad f_1(\lda) = \dfrac{f(\lda)}{d\lda^{\alpha}e^{\eta\lda}}. \]
Then the assumption 2 can be rewritten as follows:
\[ F_1(\eta\lda) = F_1(\lda) + f_1(\lda). \]
By induction we obtain
\[ F_1(\lda) = F_1\left( \dfrac{\lda}{\eta^{N}} \right) + \sum\limits_{k=1}^N f_1\left(\dfrac{\lda}{\eta^k}\right). \]
Note that $F_1(\lda) \leq \lda^{-\alpha} \to 0$ as $\lda \to 0$. Whence we can write
\[ F_1(\lda) = \sum\limits_{k=1}^{\infty} f_1\left(\dfrac{\lda}{\eta^k}\right). \]
Now we introduce the functions
\[ G(\lda) = \sum\limits_{k=0}^{\infty} f_1(\eta^k\lda),\quad H(\lda) = F_1(\lda) + G(\lda). \]

The estimate $f_1(\lda) = O(\lda^{-\alpha}e^{-\varepsilon\lambda})$ implies that $G(\lda)$ is well 
defined, and $G(\lda) = O(\lda^{-\alpha}e^{-\varepsilon\lambda})$. Further, by construction we have
$H(\eta\lda) = H(\lda)$, i.e. $H(\lda)$ is a $1$-periodic function of $\log_\eta(\lda)$.

We denote $\Phi(x) = H(\eta^x)$ and conclude that $F_1(\lda) = \Phi(\log_\eta(\lda)) + 
O(\lda^{-\alpha}e^{-\varepsilon\lambda})$. Then we turn back to the function $F(\lda)$, and the
statement follows.
\end{proof}

\section{The asymptotics of $E(\lda)$}
\subsection{The first term}
We claim that, for any generalized Cantor ladder, the function $E(\lda)$ satisfies the assumptions 
of the Main Lemma. Indeed, $0 \leq C(t) \leq 1$ implies 
$1 \leq E(\lda) \leq e^\lda$ for all $\lda\geq0$.
Further, define $\eta = \dfrac{1}{\rho_m} > 1$. Then we can rewrite the relation \eqref{main_eq} 
as follows:
\[ E(\eta\lda) = \Delta_{2m-1} e^{(\eta-1)\lda} E(\lda) + f(\lda), \]
where $f(\lda) = \Delta_1 E\left(\rho_1 \eta\lda\right) + 
\Delta_2 e^{h_1 \eta\lda} + \ldots + \Delta_{2m-2} e^{(\eta-1)\lda} = O(e^{(\eta-1)\lda})$.

Applying the Main Lemma we obtain
\begin{equation}\label{eq_E_Phi} 
E(\lambda) = H(\lda)\lambda^\alpha e^\lambda + O(1),
\end{equation}
where $\alpha = \log_\eta(\Delta_{2m-1})<0$, $H(\lda) = \Phi(\log_\eta(\lambda))$, $\Phi(x)$ is 
$1$-periodic.

The function $H(\lambda)$ is a sum of series which converges uniformly on any compact in the
half-plane $Re(\lambda) > 0$. Therefore, analyticity of $f(\lambda)$ implies analyticity of $\Phi(x)$ 
in the strip $\lvert Im(x) \rvert < \frac{\pi}{2\ln(\eta)}$. 
In general case it is difficult to say anything more since $f(\lambda)$ is expressed in terms of 
$E(\lambda)$. For example, in a degenerate case $\rho_k = \Delta_{2k-1}$, $\Delta_{2k}=0$ we obtain 
$C(t)\equiv t$, $E(\lda) = \frac{e^\lda-1}{\lda}$, and thus $\Phi(x)$ becomes a constant.
In general case even the question whether $\Phi(x)$ is constant remains open. However, for regular
ladders the dependence of $f(\lambda)$ on $E(\lambda)$ can be eliminated. Then $\Phi(x)$ 
can be written in a more explicit form. This allows us to obtain additional information.

For a (non-degenerate) regular ladder we have $\eta=m$, $\alpha<-1$, and \eqref{main_eq} can be 
rewritten as follows:
\[
E(m\lambda) = \Delta_1 \dfrac{e^{m\lambda}-1}{e^{\lambda}-1}\,E(\lambda) + 
\Delta_2 \dfrac{e^{(m-1)\lambda}-1}{e^{\lambda}-1}\, e^{\lambda}.
\]
We introduce the functions
\[
\tilde{F_1}(\lambda) = \dfrac{E(\lambda)}{\lambda^\alpha \left( e^{\lambda} - 1\right)}, \qquad
\tilde{f_1}(\lambda) = \dfrac{\Delta_2\left(e^{(m-1)\lambda}-1\right)e^{\lambda}}
{\Delta_1 (e^{\lambda}-1)(e^{m\lambda}-1) \lambda^\alpha}
\]
and obtain
\[
\tilde{F_1}(m\lambda) = \tilde{F_1}(\lambda) + \tilde{f_1}(\lambda).
\]
Repeating the proof of the Main Lemma we arrive at
\begin{equation}\label{eq_E_Phi_tilde}
E(\lambda) = \tilde{\Phi}(\log_m(\lambda))\lambda^\alpha (e^\lambda - 1) + O(1) = 
\tilde{\Phi}(\log_m(\lambda))\lambda^\alpha e^\lambda + O(1),
\end{equation}
where $\tilde{\Phi}(x) = \sum\limits_{k\in\mathbb{Z}} \tilde{f_1}(m^{k+x})$ is $1$-periodic function. 
From relations \eqref{eq_E_Phi} and \eqref{eq_E_Phi_tilde} we conclude that 
$\Phi(\log_m(\lambda))-\tilde{\Phi}(\log_m(\lambda)) = O(\lambda^{-\alpha}e^{-\lambda})$, i.e. 
$\Phi(x) \equiv \tilde{\Phi}(x)$.

Thus, we have the explicite formula for $\Phi(x)$. Now we can study the Fourier series
\[
\Phi(x) = \sum_{n\in \mathbb{Z}} c_n e^{2\pi i n x} .
\]
To proceed we need the Riemann formula, see, e.g., \cite{BE}:
\[
\zeta(\lambda) = \dfrac{1}{\Gamma(\lambda)}\int_0^\infty \dfrac{t^{\lambda-1}}{e^t-1}\, dt.
\]

\begin{theorem}
For a regular ladder, the Fourier coefficients of the function $\Phi(x)$ can be evaluated as follows
\begin{equation}\label{eq_Phi_Fourier} 
c_n = \dfrac{\Delta_2(1-\Delta_1)}{\Delta_1 \ln(m)} \Gamma(\alpha_n) \zeta(\alpha_n),
\end{equation}
where $\alpha_n = -\alpha-\dfrac{2\pi i n}{\ln(m)}$. 
\end{theorem}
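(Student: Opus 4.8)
The plan is to compute the Fourier coefficient $c_n = \int_0^1 \Phi(x)\,e^{-2\pi i n x}\,dx$ directly from the explicit series $\Phi(x) = \sum_{k\in\mathbb{Z}}\tilde{f_1}(m^{k+x})$ established above. First I would unfold the periodization: substituting $u = k+x$ and using $e^{2\pi i n k}=1$ for integer $k$, the sum over $k$ together with the integral over $[0,1)$ collapse into a single integral over the whole line,
\[
c_n = \int_{-\infty}^{\infty}\tilde{f_1}(m^u)\,e^{-2\pi i n u}\,du.
\]
The interchange of summation and integration is justified by the exponential decay of $\tilde{f_1}$ noted in the proof of the Main Lemma. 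The change of variable $\lambda = m^u$ (so that $e^{-2\pi i n u} = \lambda^{-2\pi i n/\ln m}$ and $du = d\lambda/(\lambda\ln m)$) then produces a Mellin-type integral; absorbing the factor $\lambda^{-\alpha}$ carried by $\tilde{f_1}$ and recalling $\alpha_n = -\alpha-\tfrac{2\pi i n}{\ln m}$, it takes the form
\[
c_n = \frac{\Delta_2}{\Delta_1\ln m}\int_0^{\infty}\frac{(e^{(m-1)\lambda}-1)\,e^{\lambda}}{(e^{\lambda}-1)(e^{m\lambda}-1)}\,\lambda^{\alpha_n-1}\,d\lambda.
\]

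The decisive step is a telescoping simplification of the exponential factor. Writing $(e^{(m-1)\lambda}-1)\,e^{\lambda} = e^{m\lambda}-e^{\lambda} = (e^{m\lambda}-1)-(e^{\lambda}-1)$, one obtains
\[
\frac{(e^{(m-1)\lambda}-1)\,e^{\lambda}}{(e^{\lambda}-1)(e^{m\lambda}-1)} = \frac{1}{e^{\lambda}-1}-\frac{1}{e^{m\lambda}-1},
\]
which reduces the integrand to two copies of the Riemann kernel. Applying the Riemann formula in the scaled form $\int_0^{\infty}\frac{\lambda^{s-1}}{e^{a\lambda}-1}\,d\lambda = a^{-s}\,\Gamma(s)\,\zeta(s)$, once with $a=1$ and once with $a=m$ (both at $s=\alpha_n$), gives
\[
\int_0^{\infty}\left(\frac{1}{e^{\lambda}-1}-\frac{1}{e^{m\lambda}-1}\right)\lambda^{\alpha_n-1}\,d\lambda = \left(1-m^{-\alpha_n}\right)\Gamma(\alpha_n)\,\zeta(\alpha_n).
\]

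It then remains to identify the constant. Since $m^{-2\pi i n/\ln m} = e^{-2\pi i n}=1$ and $m^{\alpha}=\Delta_{2m-1}=\Delta_1$ by the definition of $\alpha$, we have $m^{-\alpha_n}=m^{\alpha}=\Delta_1$, hence $1-m^{-\alpha_n}=1-\Delta_1$; combining with the prefactor $\tfrac{\Delta_2}{\Delta_1\ln m}$ yields precisely \eqref{eq_Phi_Fourier}. The only point demanding genuine care — and the step I expect to be the main obstacle — is convergence: each of the two Mellin integrals converges at the origin (where the kernel behaves like $\lambda^{-1}$) and at infinity exactly when $\operatorname{Re}(\alpha_n)=-\alpha>1$, i.e. $\alpha<-1$. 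This holds for a non-degenerate regular ladder because $m\Delta_1+(m-1)\Delta_2=1$ with $\Delta_2>0$ forces $\Delta_1<1/m$, i.e. $\alpha=\log_m\Delta_1<-1$; the same inequality is what legitimizes representing $\zeta(\alpha_n)$ by its defining integral. All remaining manipulations are then routine.
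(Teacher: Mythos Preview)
Your proposal is correct and follows essentially the same route as the paper: unfold the periodization $\Phi(x)=\sum_{k}\tilde{f_1}(m^{k+x})$ into a single Mellin integral via $\lambda=m^{u}$, decompose the kernel as $\frac{1}{e^{\lambda}-1}-\frac{1}{e^{m\lambda}-1}$, apply the Riemann formula, and simplify $1-m^{-\alpha_n}=1-\Delta_1$. Your added discussion of the convergence condition $\operatorname{Re}\alpha_n=-\alpha>1$ is a welcome point of rigor that the paper leaves implicit.
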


\begin{remark}
Since $Re\,\alpha_n > 1$, this implies $c_n\neq 0$ if a regular ladder is non-degenerate 
($\Delta_2 \neq 0$). In particular, in this case $\Phi(x) \neq const$.
For $m=2$ the formula \eqref{eq_Phi_Fourier} was obtained in \cite{GK}.
\end{remark}

%\begin{proof} 
\noindent {\it Proof}. We have
\begin{align*}
c_n &= \int\limits_0^1 \Phi(s) e^{-2\pi i n s} \, ds =
\sum_{j\in \mathbb{Z}}\int_0^1\tilde{f_1}(m^{s+j})e^{-2\pi i n s}\,ds = \\
    &= \dfrac{1}{\ln(m)} \sum_{j\in \mathbb{Z}}\int_{m^j}^{m^{j+1}}\tilde{f_1}(t)e^{-2\pi i n \log_m(t)}\,\frac{dt}{t} =
      \dfrac{1}{\ln(m)} \int_0^\infty t^\alpha \tilde{f_1}(t)t^{\alpha_n-1}\,dt = \\
    &= \dfrac{\Delta_2}{\Delta_1 \ln(m)} \int_0^\infty \dfrac{\left(e^{(m-1)t}-1\right)e^{t}}
 {(e^{t}-1)(e^{m t}-1)} \; t^{\alpha_n-1}\,dt = \\
&= \dfrac{\Delta_2}{\Delta_1 \ln(m)} \left( \int_0^\infty \dfrac{t^{\alpha_n-1}}{e^{t}-1} \, dt - 
\int_0^\infty \dfrac{t^{\alpha_n-1}}{e^{m t}-1} \, dt \right) = \\
&= \dfrac{\Delta_2}{\Delta_1 \ln(m)} \Gamma(\alpha_n) \zeta(\alpha_n) \left( 1 - m^{-\alpha_n} \right) =
  \dfrac{\Delta_2(1-\Delta_1)}{\Delta_1 \ln(m)} \Gamma(\alpha_n) \zeta(\alpha_n).\tag*{$\square$}
\end{align*}
%\end{proof}

\subsection{More terms in the simplest case}
Let us continue to study the asymptotic expansion. We begin from the simple example.

\begin{theorem}
Let $m=2$, $\rho_1 = \rho_2=\frac{1}{2}$. Then the function $E(\lda)$ can be represented as follows:
\begin{equation}\label{E_as_row}
E(\lda) = H(\lda)\lda^\alpha e^\lda + \sum\limits_{k=0}^{\infty}e^{-k\lda}\left(C_{k} + D_{k} H(\lda)\lda^\alpha\right),
\end{equation}
where the series converges uniformly for sufficiently large $\lda$.

Here $C_k$, $D_k$ are numbers satisfying the following recurrent relations:
\begin{equation}\label{rekur_2}
\begin{split}
&C_0 = -\frac{\Delta_2}{\Delta_3},\qquad D_0 = -\frac{\Delta_1}{\Delta_3},\\
C_{k+1} =&
\left\{
\begin{array}{ll}
-\dfrac{\Delta_1}{\Delta_3}C_{k}, & k \equiv 1 \pmod{2} \\
\dfrac{1}{\Delta_3}C_{k/2}-\dfrac{\Delta_1}{\Delta_3}C_{k},  & k \equiv 0 \pmod{2} \\
\end{array} 
\right.\\
D_{k+1} =&
\left\{
\begin{array}{ll}
-\dfrac{\Delta_1}{\Delta_3}D_{k}, & k \equiv 1 \pmod{2} \\
D_{k/2}-\dfrac{\Delta_1}{\Delta_3}D_{k},   & k \equiv 0 \pmod{2}. \\
\end{array} 
\right.\\
\end{split}
\end{equation}
\end{theorem}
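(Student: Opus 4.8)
The plan is to substitute the leading term of \eqref{eq_E_Phi} into the functional equation and to determine the correction as a convergent series. For $m=2$, $\rho_1=\rho_2=\frac12$ we have $\eta=2$, so \eqref{main_eq} reads $E(2\lda)=(\Delta_1+\Delta_3 e^{\lda})E(\lda)+\Delta_2 e^{\lda}$. Writing $E(\lda)=H(\lda)\lda^\alpha e^{\lda}+R(\lda)$ and using $H(2\lda)=H(\lda)$ together with $(2\lda)^\alpha=2^\alpha\lda^\alpha=\Delta_3\lda^\alpha$ (recall $\alpha=\log_2\Delta_3$), I would first compute that the leading term fails the functional equation exactly by $-\Delta_1 H(\lda)\lda^\alpha e^{\lda}$. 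Hence the remainder obeys
\[ R(2\lda)=(\Delta_1+\Delta_3 e^{\lda})\,R(\lda)+\Delta_2 e^{\lda}+\Delta_1 H(\lda)\lda^\alpha e^{\lda}. \]

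Next I would look for $R$ in the form $R(\lda)=\sum_{k\ge0}e^{-k\lda}\big(C_k+D_k H(\lda)\lda^\alpha\big)$. Substituting this ansatz into the displayed equation and using that $\lda\mapsto2\lda$ sends $e^{-k\lda}\mapsto e^{-2k\lda}$ while the companion factor $H(\lda)\lda^\alpha\mapsto\Delta_3 H(\lda)\lda^\alpha$, I would match the coefficients of $e^{-j\lda}$ and of $e^{-j\lda}H(\lda)\lda^\alpha$ separately. The terms proportional to $e^{\lda}$ (the index $j=-1$) force $C_0=-\Delta_2/\Delta_3$ and $D_0=-\Delta_1/\Delta_3$; the coefficients with odd $j$ (absent on the left, since only even powers survive the doubling) give the first lines of \eqref{rekur_2}, while the even indices $j=2k$ couple back to the index $k=j/2$ through the $\Delta_3$-rescaling of the companion factor, which is exactly what turns the $1/\Delta_3$ of the $C$-relation into a plain $1$ in the $D$-relation, yielding the second lines. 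This step is essentially bookkeeping and reproduces \eqref{rekur_2}.

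The substance of the proof is convergence and identification. For convergence I would bound $M_k=\max_{0\le j\le k}|C_j|$: each case of the recurrence gives $|C_{k+1}|\le\frac{1+\Delta_1}{\Delta_3}M_k$, whence $|C_k|\le\big(\frac{1+\Delta_1}{\Delta_3}\big)^k|C_0|$, and the same bound (indeed a sharper one) for $D_k$. Since $H(\lda)=\Phi(\log_2\lda)$ is bounded and $\lda^\alpha\to0$, the factor $H(\lda)\lda^\alpha$ is bounded for large $\lda$, so the series defining $R$ converges absolutely and uniformly once $\lda>\ln\frac{1+\Delta_1}{\Delta_3}$, which is the meaning of "sufficiently large $\lda$."

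Finally I would identify the series with the true remainder. Let $\tilde R$ denote the sum of the convergent series; by the recurrence \eqref{rekur_2} it satisfies, termwise and hence (by absolute convergence) genuinely, the same functional equation as $R$. Then $W:=R-\tilde R$ solves the homogeneous equation $W(2\lda)=(\Delta_1+\Delta_3 e^{\lda})W(\lda)$, and $W$ is bounded on a ray because $R=E-H(\lda)\lda^\alpha e^{\lda}=O(1)$ by \eqref{eq_E_Phi} while $\tilde R$ is bounded. Iterating gives $W(2^n\lda_0)=\prod_{j=0}^{n-1}(\Delta_1+\Delta_3 e^{2^j\lda_0})\,W(\lda_0)$, and since the product tends to $+\infty$ while $W$ stays bounded, $W(\lda_0)=0$ for every large $\lda_0$. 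Thus $R\equiv\tilde R$ and \eqref{E_as_row} follows. I expect this identification step — ruling out a nonzero bounded solution of the homogeneous equation — together with the uniform exponential control of the doubly-indexed recurrence to be the main obstacle; the coefficient matching itself is routine.
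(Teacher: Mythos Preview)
Your argument is correct and tracks the paper's proof closely: both substitute the leading term into the functional equation $E(2\lda)=(\Delta_1+\Delta_3 e^{\lda})E(\lda)+\Delta_2 e^{\lda}$, derive \eqref{rekur_2} by matching coefficients, bound $|C_k|,|D_k|$ exponentially to obtain uniform convergence, and finish by showing that a bounded solution of the associated homogeneous equation must vanish. The one substantive difference is in this last step. The paper normalizes the remainder so that it satisfies the \emph{contracting} relation $\mathfrak{E}_1(2\lda)=\mathfrak{E}_1(\lda)\bigl(1+\tfrac{\Delta_1}{\Delta_3}e^{-\lda}\bigr)$ and then runs a bootstrap, improving the decay exponent by $\tfrac12$ at each pass until $\mathfrak{E}_1\equiv0$. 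You instead keep the un-normalized relation $W(2\lda)=(\Delta_1+\Delta_3 e^{\lda})W(\lda)$ and iterate forward: the product $\prod_{j<n}(\Delta_1+\Delta_3 e^{2^j\lda_0})$ diverges while $W$ remains bounded, forcing $W(\lda_0)=0$. Your version is shorter and more transparent in this two-step case; the paper's bootstrap, however, is the form that carries over to the general situation $\rho_m=\min\{\rho_i\}$ treated in the next subsection, where the homogeneous equation couples several dilated arguments $\rho_i\eta\lda$ simultaneously and a single forward product is no longer available.
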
 

\begin{proof}
The relation \eqref{main_eq} in this case can be rewritten as follows:
\begin{equation}\label{eq_E1_m1} 
E(2\lda) = E(\lda)(\Delta_1 + \Delta_3 e^\lda) + \Delta_2 e^\lda.
\end{equation}
Applying the Main Lemma we can write the result as follows:
\[ E(\lda) = H(\lda)\lda^\alpha e^\lda(1+E_1(\lda)),\quad E_1(\lda) = O(\lda^{-\alpha}e^{-\lda}). \]
We substitute this into \eqref{eq_E1_m1} and obtain
\begin{equation}\label{eq_E1_m2} 
E_1(2\lda) = E_1(\lda)\left(1 + \frac{\Delta_1}{\Delta_3} e^{-\lda}\right) + 
\left(\frac{\Delta_2}{\Delta_3}\frac{\lda^{-\alpha}}{H(\lda)} e^{-\lda} + \frac{\Delta_1}{\Delta_3} e^{-\lda}\right).
\end{equation}
This implies
\[ E_1(\lda) + \frac{\Delta_2}{\Delta_3}\frac{\lda^{-\alpha}}{H(\lda)} e^{-\lda} = 
E_1(2\lda) - E_1(\lda)\frac{\Delta_1}{\Delta_3} e^{-\lda} - 
\frac{\Delta_1}{\Delta_3} e^{-\lda} = O(e^{-\lda}).\]
Denote by $E_2(\lambda)$ the right-hand side of the last equality. Then
\[ E_1(\lda) = -\frac{\Delta_2}{\Delta_3}\frac{\lda^{-\alpha}}{H(\lda)} e^{-\lda} + E_2(\lda),\quad E_2(\lda) = O(e^{-\lda}). \]
This gives us the second term of the asymptotics
\[ E(\lda) = H(\lda)\lda^\alpha e^\lda - \frac{\Delta_2}{\Delta_3} + O(\lda^{\alpha}). \]
We can substitute it into the relation \eqref{eq_E1_m1} and obtain the expression for $E_2(\lda)$ 
similar to \eqref{eq_E1_m2}:
\[ E_2(2\lda) = E_2(\lda)\left(1 + \frac{\Delta_1}{\Delta_3} e^{-\lda}\right) + 
\left(\frac{\Delta_1}{\Delta_3} e^{-\lda} + \frac{\Delta_2(1 - \Delta_1)}{\Delta_3^2}\frac{\lda^{-\alpha}}{H(\lda)} e^{-2\lda}\right).\]
Repeating this algorithm we obtain formulas \eqref{rekur_2} and \eqref{E_as_row} as asymptotic expansion.
Next, from \eqref{rekur_2} we conclude that coefficients $C_k$, $D_k$ grow not faster then an exponent
of their number:
\[ \lvert C_k \rvert \leq \lvert C_0 \rvert \left(\dfrac{2}{\Delta_3}\right)^k, \quad
\lvert D_k \rvert \leq \lvert D_0 \rvert \left(\dfrac{2}{\Delta_3}\right)^k. \]
This gives us the uniform convergence of the series in the right-hand side of \eqref{E_as_row} if $\lambda$ is sufficiently
large.

It remains to show that the right-hand side of \eqref{E_as_row} exhausts $E(\lda)$.
To do this, consider the remainder
\[ {\mathfrak E}(\lda) = E(\lda) - H(\lda)\lda^\alpha e^{\lda} - 
\sum\limits_{k=0}^{\infty}e^{-k\lda}\left(C_{k} + D_{k} H(\lda)\lda^\alpha\right). \]
Note that the sequence $E_k(\lda)$ converges to
к ${\mathfrak E}_1(\lda):=\frac{e^{-\lda}}{\lda^{\alpha}H(\lda)}{\mathfrak E}(\lda)$ in the space
$L_\infty(\Lambda,+\infty)$ for sufficiently large $\Lambda$. 
Further,
\begin{multline*}
 \left\lvert E_k(2\lda)-E_k(\lda)\left(1 + \frac{\Delta_1}{\Delta_3} e^{-\lda}\right)\right\rvert 
\leq\\
\leq \sum\limits_{j=k}^{\infty} \left( \lvert C_0 \rvert \left(\dfrac{2}{\Delta_3}\right)^{j-1}
\frac{1}{\lda^\alpha H(\lda)} + 
\lvert D_0 \rvert \left(\dfrac{2}{\Delta_3}\right)^{j-1}\right) e^{-j\lda}
\end{multline*}
tends to zero in $L_\infty(\Lambda,+\infty)$. Therefore, ${\mathfrak E}_1(\lda)$ satisfies the homogeneous
equation 
\begin{equation}\label{F1_homogen}
{\mathfrak E}_1(2\lda) = {\mathfrak E}_1(\lda)\left(1 + \frac{\Delta_1}{\Delta_3} e^{-\lda}\right), \quad \lda>\Lambda.
\end{equation}
We know that for any $\varsigma\geq 1$ the estimate ${\mathfrak E}_1(\lda) = O(e^{-\varsigma\lda})$ holds.
Whence for some $c>0$, $\varsigma \geq 1$ we have
\begin{equation}\label{F1_estim}
\lvert {\mathfrak E}_1(\lda) \rvert \leq c\, e^{-\varsigma\lda} \quad \mbox{for} \; \lda > \Lambda.
\end{equation}
From \eqref{F1_homogen} and \eqref{F1_estim} we conclude
\begin{multline*} 
\lvert {\mathfrak E}_1(\lda) \rvert = 
\lvert {\mathfrak E}_1(2\lda) - \frac{\Delta_1}{\Delta_3} e^{-\lda}{\mathfrak E}_1(\lda) \rvert \leq {} \\
{} \leq c\, e^{-2\varsigma\lda} + \frac{\Delta_1}{\Delta_3}\, c\, e^{-(\varsigma+1)\lda} \leq  
 \frac{1}{\Delta_3}\, c\, e^{-(\varsigma+1)\lda} \leq  c\, e^{-(\varsigma+1+\frac{\ln(\Delta_3)}{\Lambda})\lda}. 
\end{multline*}
Without loss of generality we can assume $\Lambda > -2\ln(\Delta_3)$. Then
 \[ \lvert {\mathfrak E}_1(\lda) \rvert \leq c\, e^{-(\varsigma+\frac{1}{2})\lda} \quad \mbox{for} \; \lda > \Lambda. \]
Repeating this argument we obtain the relation \eqref{F1_estim} with the same constant $c$ and arbitrary
$\varsigma \geq 1$. Thus, ${\mathfrak E}_1(\lda)\equiv 0$ for all $\lda > \Lambda$, which completes the
proof.
\end{proof}

\begin{remark}
For $\Delta_1 = \Delta_3$, i.e. for a regular ladder, \eqref{rekur_2} implies $D_k = 0$ for all $k \geq 1$. 
This fact is true in general case, see Theorem \ref{regular} below.
\end{remark}

\subsection{More terms in the case $\rho_m =\min\{\rho_i\}$}

In this subsection we transfer our scheme to a general case. 
Unfortunately, it is not always possible. Here we introduce an additional assumption:
$\rho_m =\min\{\rho_i\}$.
We rewrite the statement of the Main Lemma as follows:
\[
E(\lambda) = e^{\lambda} \left( H_1(\lambda) + E_1(\lambda) \right), \qquad\qquad
H_1(\lambda) =  H(\lambda)\lambda^\alpha, \quad E_1(\lambda) = O(e^{-\lambda}).
\]
We substitute this into \eqref{lemma_main_eq} and rewrite the obtained equation as follows:
\begin{equation}\label{main_eq_E_1}
\frac 1{\Delta_{2m-1}} E_1(\eta\lda) = E_1(\lda)+
\sum\limits_{i=1}^{m-1}\frac{\Delta_{2i-1}}{\Delta_{2m-1}}\,e^{-g_i\eta\lda}E_1(\rho_i\eta \lda) - \mathfrak{P}_1(\lda),
\end{equation}
\begin{equation*}
\mathfrak{P}_1(\lda) = \sum\limits_{\varsigma \in \mathfrak{I}_1} c_\varsigma^1(\lda)e^{-\varsigma\lda}.
\end{equation*}
Here 
\[ 
\mathfrak{I}_1 = \left\{\eta g_k\right\}_{k=1}^{m-1}, \quad 
c_{\eta g_k}^1(\lambda) = -\frac{\Delta_{2k-1} H_1(\eta\rho_k\lambda)+\Delta_{2k}}{\Delta_{2m-1}}. 
\]
Note that the minimal element in $\mathfrak{I}_1$ is $\eta g_{m-1} = 1$. We transform \eqref{main_eq_E_1} 
as follows:
\begin{multline}\label{eq_E1a}
E_2(\lda) := E_1(\lda) - c_{1}^1(\lda)e^{-\lda}=\\
=\frac 1{\Delta_{2m-1}}\,E_1(\eta\lda)-\sum\limits_{i=1}^{m-1}\frac{\Delta_{2i-1}}{\Delta_{2m-1}}\,e^{-g_i\eta\lda}E_1(\rho_i\eta \lda)+
\Big[\mathfrak{P}_1(\eta\lda)-c_{1}^1(\lda)e^{-\lda}\Big].
\end{multline}
We know that $E_1(\lambda) = O(e^{-\lambda})$. Therefore all terms in the right-hand side of 
\eqref{eq_E1a} are $O(e^{-\varsigma'\lda})$,  $\varsigma' > 1$, whence 
$E_2(\lda) =O(e^{-\varsigma'\lda})$. Thus,
\[E_1(\lda)=-\left(\frac{\Delta_{2m-2}}{\Delta_{2m-1}}+\frac{\Delta_{2m-3}}{\Delta_{2m-1}}H_1(\eta\rho_{m-1}\lda)\right)e^{-\lda}+O(e^{-\varsigma'\lda}).
\]
Now we can rewrite \eqref{main_eq_E_1} as follows:
\begin{equation*}
\frac 1{\Delta_{2m-1}} E_2(\eta\lda) = E_2(\lda)+
\sum\limits_{i=1}^{m-1}\frac{\Delta_{2i-1}}{\Delta_{2m-1}}\,e^{-g_i\eta\lda}E_2(\rho_i\eta \lda) - \mathfrak{P}_2(\lda),
\end{equation*}
\begin{equation*}
\mathfrak{P}_2(\lda) = \sum\limits_{\varsigma \in \mathfrak{I}_2} c_\varsigma^2(\lda)e^{-\varsigma\lda},\qquad
\mathfrak{I}_2 \subseteq (\mathfrak{I}_1 \setminus \{1\}) \cup \{\eta, \eta (\rho_i +g_i) \}.
\end{equation*}
Note that even for $\varsigma \in \mathfrak{I}_1 \cap \mathfrak{I}_2$ the coefficients 
$c_\varsigma^2(\lda)$ in general differ from $c_\varsigma^1(\lda)$. However, this relation is quite
similar to \eqref{main_eq_E_1}. Therefore, we can hope that this algorithm can be iterated.

Let us write down a general form of the iteration. We have a function $E_k(\lda)$ satisfying the 
following relations:
\begin{equation}\label{main_eq_E_k}
\frac 1{\Delta_{2m-1}} E_k(\eta\lda) = E_k(\lda)+
\sum\limits_{i=1}^{m-1}\frac{\Delta_{2i-1}}{\Delta_{2m-1}}\,e^{-g_i\eta\lda}E_k(\rho_i\eta \lda) - \mathfrak{P}_k(\lda),
\end{equation}
\begin{equation*}
\mathfrak{P}_k(\lda) = \sum\limits_{\varsigma \in \mathfrak{I}_k} c_\varsigma^k(\lda)e^{-\varsigma\lda}.
\end{equation*}
\begin{equation*}
E_k(\lda) = O(e^{-\varsigma_k \lda}), \quad 
\varsigma_k \leq \varsigma_k': = \min\limits_{\varsigma \in \mathfrak{I}_k}\varsigma.
\end{equation*}
We rewrite \eqref{main_eq_E_k} as follows:
\begin{multline*}
E_k(\lda)-c_{\varsigma_k'}^k(\lda)e^{-\varsigma_k'\lda}=\\
=\frac 1{\Delta_{2m-1}}E_k(\eta\lda)-\sum\limits_{i=1}^{m-1}\frac{\Delta_{2i-1}}{\Delta_{2m-1}}\,e^{-g_i\eta\lda}E_k(\rho_i\eta \lda)+
\Big[\mathfrak{P}_k(\lda)-c_{\varsigma_k'}^k(\lda)e^{-\varsigma_k'\lda}\Big].
\end{multline*}
Note that
\[
E_k(\eta\lda) = O(e^{-\eta\varsigma_k\lda}), \quad \mbox{and}\quad \eta\varsigma_k > \varsigma_k;
\]
\[
e^{-g_i\eta\lda}E_k(\rho_i\eta\lda) = O(e^{-\eta(g_i+\rho_i\varsigma_k)\lda}), \quad \mbox{and} \quad
\eta(g_i+\rho_i\varsigma_k) > \eta\varsigma_k \rho_i \geq \varsigma_k,
\]
in the last inequality we use the assumption $\rho_m =\min\{\rho_i\}$;
\[
\mathfrak{P}_k(\lda) - c_{\varsigma_k'}^k(\lda)e^{-\varsigma_k'\lda} = 
O(e^{-\varsigma_k''\lda}), \quad \mbox{and} \quad
\varsigma_k'': = \min\limits_{\varsigma \in \mathfrak{I}_k \setminus \{\varsigma_k'\}}\varsigma 
> \varsigma_k' \geq \varsigma_k.
\]
This implies
\[
E_{k+1}(\lda) := E_k(\lda)-c_{\varsigma_k'}^k(\lda)e^{-\varsigma_k'\lda} 
= O(e^{-\varsigma_{k+1}\lda}), \quad \varsigma_{k+1} > \varsigma_k.
\]
After substitution we obtain for $E_{k+1}(\lda)$ a relation similar to \eqref{main_eq_E_k}.
It remains to make sure that $\varsigma_{k+1} \leq \varsigma_{k+1}'$:
\[
\mathfrak{I}_{k+1} \subseteq (\mathfrak{I}_k \setminus \{\varsigma_k'\}) \cup 
\{\eta\varsigma_k', \eta(\rho_i\varsigma_k' + g_i) \};
\]
\begin{multline*}
\varsigma_{k+1} = \min\{\eta\varsigma_k, \eta(g_i+\rho_i\varsigma_k), \varsigma_k''\} \leq\\
\leq\min\left(\{\eta\varsigma_k', \eta(g_i+\rho_i\varsigma_k')\} \cup 
(\mathfrak{I}_k \setminus \{\varsigma_k'\})\right) \leq
\min\limits_{\varsigma \in \mathfrak{I}_{k+1}}\varsigma = \varsigma_{k+1}'.
\end{multline*}
Thus, we can separate more and more new terms.

\begin{theorem}
Let $\rho_m =\min\{\rho_i\}$. Then the function $E(\lda)$ can be represented as a series
\begin{multline}\label{E_as_row_2}
 E(\lda) = H(\lda)\lda^\alpha e^\lambda-
\left(\frac{\Delta_{2m-2}}{\Delta_{2m-1}}+\Delta_{2m-3}H\Big(\frac{\rho_{m-1}}{\rho_m}\lda\Big)
\left(\rho_{m-1}\lda\right)^{\alpha}\right)+\\
+\sum_{\varsigma\in \mathfrak{I}} c_\varsigma(\lda)e^{(1-\varsigma)\lda}
\end{multline}
(all exponents in the last sum are negative). This series converges uniformly for sufficiently large $\lda$.
\end{theorem}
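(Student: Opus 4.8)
The plan is to convert the iteration constructed above into a genuinely convergent expansion and then to show that it exhausts $E(\lda)$, following the scheme of the theorem for $m=2$ but with more careful control of the exponents and coefficients. First I would assemble the series. At the $k$-th step we removed from $\mathfrak{P}_k$ the term of minimal exponent, $c_{\varsigma_k'}^k(\lda)e^{-\varsigma_k'\lda}$, and set $E_{k+1}(\lda)=E_k(\lda)-c_{\varsigma_k'}^k(\lda)e^{-\varsigma_k'\lda}$. Summing these peeled terms over all $k$ and recalling $E(\lda)=e^\lda\big(H_1(\lda)+E_1(\lda)\big)$ produces the formal series in \eqref{E_as_row_2}: the step $k=1$, where $\varsigma_1'=\eta g_{m-1}=1$, yields the displayed $O(1)$ term once one uses $\rho_m^{-\alpha}=\eta^\alpha=\Delta_{2m-1}$ to rewrite $\tfrac{\Delta_{2m-3}}{\Delta_{2m-1}}H_1(\eta\rho_{m-1}\lda)$ as $\Delta_{2m-3}H\big(\tfrac{\rho_{m-1}}{\rho_m}\lda\big)(\rho_{m-1}\lda)^\alpha$, while every later peeled exponent satisfies $\varsigma>1$ and contributes a summand $c_\varsigma(\lda)e^{(1-\varsigma)\lda}$ with negative exponent. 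Here $\mathfrak{I}$ is the set of values $\varsigma_k'$ peeled off at steps $k\geq2$, and $c_\varsigma$ is the coefficient $c_{\varsigma_k'}^k$ at the step where $\varsigma$ is removed; these exponents are distinct since $\varsigma_{k+1}'>\varsigma_k'$.

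The crux of the argument, and the step I expect to be the main obstacle, is the uniform convergence of this series for large $\lda$. The exponents are generated from the initial ones by the affine maps $T_0(\varsigma)=\eta\varsigma$ and $T_i(\varsigma)=\eta(\rho_i\varsigma+g_i)$, $i=1,\dots,m-1$. Under the standing assumption $\rho_m=\min\{\rho_i\}$ one has $\eta\rho_i=\rho_i/\rho_m\geq1$, and since $\eta>1$ and $\eta g_i\geq\eta g_{m-1}=1$, each map increases its argument by at least $\delta:=\min\{\eta-1,1\}>0$ on $[1,\infty)$. Hence an exponent obtained by a chain of $L$ map applications is at least $1+L\delta$, so only finitely many elements of $\mathfrak{I}$ lie below any level $M$ (at most $m^{(M-1)/\delta+1}$ of them), giving $\varsigma_k'\to\infty$ and no finite accumulation point. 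For the coefficients I would note that each step expresses a new coefficient as $\tfrac1{\Delta_{2m-1}}c_{\varsigma_k'}^k(\eta\,\cdot)$ or $\tfrac{\Delta_{2i-1}}{\Delta_{2m-1}}c_{\varsigma_k'}^k(\rho_i\eta\,\cdot)$, with $H_1(\lda)=H(\lda)\lda^\alpha$ bounded for $\lda\geq1$; writing $c_\varsigma$ as the finite sum over the generation-chains producing $\varsigma$, a chain of length $L$ forces $\varsigma\geq1+L\delta$ and contributes a product of at most $L$ multipliers bounded by some fixed $\tilde K$, so $|c_\varsigma(\lda)|\leq C K^{\varsigma}$ for $\lda\geq1$ with $K=(m\tilde K)^{1/\delta}$. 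Together with the exponentially bounded counting function of $\mathfrak{I}$, the series $\sum_{\varsigma\in\mathfrak{I}}c_\varsigma(\lda)e^{(1-\varsigma)\lda}$ is then dominated, uniformly for $\lda\geq\Lambda$ with $\Lambda$ large enough that $(m^2\tilde K)^{1/\delta}e^{-\Lambda}<1$, by a convergent geometric-type series, which gives the claimed uniform convergence. This bookkeeping is the delicate part: unlike the $m=2$ case, where the exponents are the integers and the coefficients obey a transparent recursion, here both arise from a branching process governed by $T_0,\dots,T_{m-1}$, and the hypothesis $\rho_m=\min\{\rho_i\}$ is exactly what keeps every branch expanding.

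It then remains to show that the series exhausts $E(\lda)$, i.e. that the remainder $\mathfrak{E}(\lda)$ obtained by subtracting the full series from $E(\lda)$ vanishes for large $\lda$. As in the case $m=2$ I would pass to $\mathfrak{e}(\lda):=e^{-\lda}\mathfrak{E}(\lda)$, which by the preceding convergence equals $\lim_{k\to\infty}E_k(\lda)$ in $L_\infty(\Lambda,+\infty)$; since $\mathfrak{P}_k(\lda)=O(e^{-\varsigma_k'\lda})$ with $\varsigma_k'\to\infty$, letting $k\to\infty$ in \eqref{main_eq_E_k} shows that $\mathfrak{e}$ satisfies the homogeneous relation $\tfrac1{\Delta_{2m-1}}\mathfrak{e}(\eta\lda)=\mathfrak{e}(\lda)+\sum_{i=1}^{m-1}\tfrac{\Delta_{2i-1}}{\Delta_{2m-1}}e^{-g_i\eta\lda}\mathfrak{e}(\rho_i\eta\lda)$. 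Moreover $\mathfrak{e}=O(e^{-\varsigma\lda})$ for every $\varsigma$, because $E_k=O(e^{-\varsigma_k\lda})$ and the series tail is $O(e^{-\varsigma_k'\lda})$. I would then solve the homogeneous relation for $\mathfrak{e}(\lda)$ and feed the bound $|\mathfrak{e}(\lda)|\leq c\,e^{-\varsigma\lda}$ back in: the two resulting terms are $O(e^{-\eta\varsigma\lda})$ and $O(e^{-\eta(g_i+\rho_i\varsigma)\lda})$, both with exponent exceeding $\varsigma$ by at least $\delta$, so for $\Lambda$ large the constant factor is reabsorbed and the estimate improves to $|\mathfrak{e}(\lda)|\leq c\,e^{-(\varsigma+\delta/2)\lda}$ with the same $c$. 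Iterating drives $\varsigma\to\infty$ at fixed $c$, forcing $\mathfrak{e}\equiv0$ on $(\Lambda,+\infty)$ and hence $\mathfrak{E}\equiv0$, which completes the proof.
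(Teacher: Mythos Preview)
Your proposal is correct and follows essentially the same route as the paper: assemble the series from the iterative peeling, use the hypothesis $\rho_m=\min\{\rho_i\}$ to get a uniform gap $\delta>0$ for the exponent maps $\varsigma\mapsto\eta\varsigma$ and $\varsigma\mapsto\eta(\rho_i\varsigma+g_i)$, bound the coefficients exponentially in $\varsigma$, and then run the same bootstrapping on the homogeneous equation to kill the remainder. The only cosmetic difference is that the paper obtains the bound $|c_\varsigma(\lda)|\le C_1C_2^{\varsigma}$ by induction on the explicit recurrence $c_\varsigma(\lda)=c_\varsigma^1(\lda)+\tfrac1{\Delta_{2m-1}}c_{\varsigma/\eta}(\eta\lda)-\sum_i\tfrac{\Delta_{2i-1}}{\Delta_{2m-1}}c_{(-g_i+\varsigma/\eta)/\rho_i}(\rho_i\eta\lda)$ rather than by your chain-counting, but these are two faces of the same estimate.
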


\begin{proof}
The calculations above give us \eqref{E_as_row_2} as asymptotic expansion.
For $c_\varsigma(\lda)$, as for coefficients $C_k$, $D_k$ in the simplest case, we have a recurrence:
\[ c_\varsigma(\lda) = c_\varsigma^1(\lda)+\frac 1{\Delta_{2m-1}}c_{\varsigma/\eta}(\eta\lda)-
\sum_{i=1}^{m-1} \frac{\Delta_{2i-1}}{\Delta_{2m-1}}\,c_{(-g_i+\varsigma/\eta)/\rho_i}(\rho_i\eta\lda). \]
To prove the convergence of the series \eqref{E_as_row_2}, one should show that the exponents
$\varsigma$ grow sufficiently fast while coefficients $c_\varsigma(\lda)$ grow sufficiently slowly.

First we show by induction that there exist $C_1 > 0$, $C_2 > 1$, such that
\begin{equation}\label{c_bounded}
 \lvert c_\varsigma(\lda) \rvert \leq C_1 C_2^\varsigma.
\end{equation}
Note that for any $C_2 > 1$ there exists $C_1^{(0)}$ such that the estimate \eqref{c_bounded} holds for 
$c_\varsigma^1(\lda)$. Next, let \eqref{c_bounded} be satisfied for some first terms in the series
\eqref{E_as_row_2}. We claim that \eqref{c_bounded} holds for the next term. Indeed,
\begin{multline*}
\lvert c_\varsigma(\lda) \rvert \leq C_1^{(0)}C_2^\varsigma +
\frac {C_1}{\Delta_{2m-1}}\,C_2^{\varsigma/\eta} + \\
+\sum_{i=1}^{m-1}C_1\frac{\Delta_{2i-1}}{\Delta_{2m-1}}C_2^{\varsigma-\frac{g_i}{\rho_i}} \leq
C_1 C_2^\varsigma \left(\frac{C_1^{(0)}}{C_1} + \frac 2{\Delta_{2m-1}}\,C_2^{-\varepsilon}\right),
\end{multline*}
where $\varepsilon = \min\{ \frac {\eta-1}{\eta}\min\limits_{\varsigma\in\mathfrak{I}}\varsigma, 
\min\limits_{i < m}\frac{g_i}{\rho_i}\}$. Setting
$C_2 = \left(\frac{1}{4}\Delta_{2m-1}\right)^{-\frac 1{\varepsilon}}$ and $C_1 = 2C_1^{(0)}$, we obtain
 \eqref{c_bounded}.

Now we study the exponents in $\mathfrak{P}_k$. We introduce linear functions
\[ l_0(\varsigma) = \rho_m\varsigma, \quad l_i(\varsigma) = g_i + \rho_i\varsigma, \; i=1,\ldots, m-1,
\quad l_m(\varsigma)=\varsigma. \]
Any step of the algorithm can be described as follows: we take away the term with minimal exponent 
$\varsigma$ from $\mathfrak{P}_k$ and add this term to the series \eqref{E_as_row_2}. In this process
some terms with exponents $l_0^{-1}(l_i(\varsigma))$, $i=1,\ldots, m$ are added or changed in
$\mathfrak{P}_{k+1}$. 

The assumption $\rho_m =\min\{\rho_i\}$ implies that the graph of $l_0(\varsigma)$ does not intersect
graphs of other $l_i$ for $\varsigma > 0$. Therefore, the linear transforms
$l_0^{-1}(l_i(\varsigma))$, $i=1,\ldots, m$, have no positive fixed points. Thus, the sequence of 
exponents has no concentration points. This is shown at the Figure~\ref{img1} which shows the graphs
of $l_i(\varsigma)$ for the regular ladder with $m=2$. 

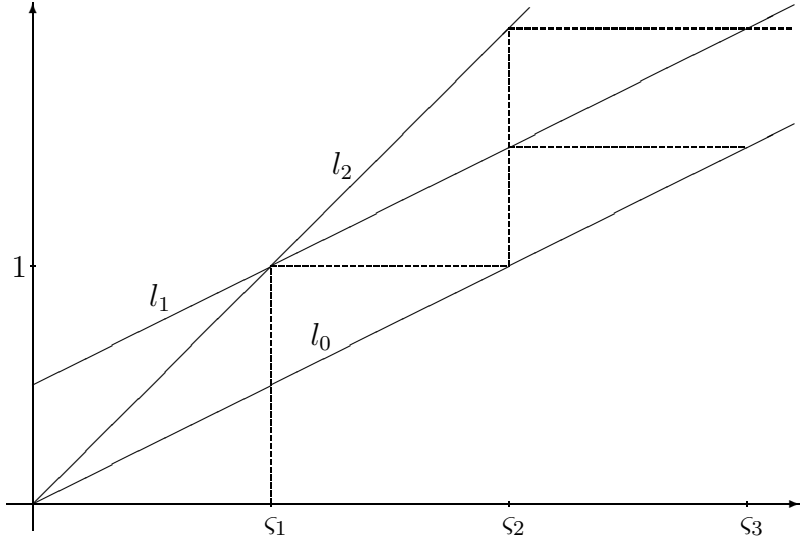
\begin{figure}[h]
\begin{center}
\begin{picture}(300,200)
\put(0,10){\vector(1,0){300}}%
\put(10,0){\vector(0,1){200}}%
\put(10,10){\line(1,1){188}}%
\put(123,135){$l_2$}%
\put(10,10){\line(2,1){288}}%
\put(115,70){$l_0$}%
\put(10,55){\line(2,1){288}}%
\put(54,85){$l_1$}%
\put(100,9){\line(0,1){2}}%
\put(97.5,0){$\varsigma_1$}%
\put(190,9){\line(0,1){2}}%
\put(187.5,0){$\varsigma_2$}%
\put(280,9){\line(0,1){2}}%
\put(277.5,0){$\varsigma_3$}%
%\put(9,55){\line(1,0){2}}%
%\put(0,53){$\varsigma_1$}%
%
\put(9,100){\line(1,0){2}}%
\put(2,96){$1$}%
%\put(9,145){\line(1,0){2}}%
%\put(0,143){$\varsigma_3$}%
%\put(9,190){\line(1,0){2}}%
%\put(2,185){$2$}%
%
%\multiput(10,55)(3,0){30}{\line(1,0){2}}%
\multiput(100,10)(0,3){30}{\line(0,1){2}}%
\multiput(100,100)(3,0){30}{\line(1,0){2}}%
\multiput(190,100)(0,3){30}{\line(0,1){2}}%
\multiput(190,145)(3,0){30}{\line(1,0){2}}%
\multiput(190,190)(3,0){36}{\line(1,0){2}}%
\end{picture}
\caption{The sequence of exponents $\varsigma_k$ for a regular ladder}
\label{img1}
\end{center}
\end{figure}

So, instead of the term with exponent $\varsigma$ any step of the algorithm adds to $\mathfrak{P}_k$ 
at most $m$ other terms with exponents greater than $\varsigma+\delta$ with some $\delta>0$. To estimate
the series in \eqref{E_as_row_2} we change all new exponents to the minimal one (note that all the
exponents arising at subsequent steps also decrease). Taking \eqref{c_bounded} into account we obtain
for  $\lda > \ln (C_2)$
\begin{align}\label{estim_row}
\sum_{\varsigma\in \mathfrak{I}} |c_\varsigma(\lda)|e^{(1-\varsigma)\lda} &\leq 
\sum_{\varsigma\in \mathfrak{I}} C_1 C_2^\varsigma\ e^{(1-\varsigma)\lda} =
C_1 C_2\sum_{\varsigma\in \mathfrak{I}}e^{(1-\varsigma)(\lda-\ln C_2)} \leq\nonumber\\
&\leq C_1 C_2\sum_{\varsigma\in \mathfrak{I}_0} \sum_{k=0}^{\infty} m^k e^{(1-(\varsigma+k\delta))(\lda-\ln C_2)} =\nonumber\\
&= C_1 C_2\sum_{\varsigma\in \mathfrak{I}_0} 
\left(e^{(1-\varsigma)(\lda-\ln C_2)}\sum_{k=0}^{\infty} e^{-k\delta(\lda-\ln C_2-\frac{\ln(m)}{\delta})}\right).
\end{align}
The last series converges uniformly for sufficiently large $\lambda$.

To complete the proof, as in the simplest case, we consider the remainder
\[ {\mathfrak E}(\lda) = E(\lda) - H(\lda)\lda^\alpha e^\lambda +%\left(
\frac{\Delta_{2m-2}}{\Delta_{2m-1}}+\Delta_{2m-3}H\Big(\frac{\rho_{m-1}}{\rho_m}\lda\Big)
\left(\rho_{m-1}\lda\right)^{\alpha}%\right) 
- \sum_{\varsigma\in \mathfrak{I}} c_\varsigma(\lda)e^{(1-\varsigma)\lda} \]
and note that the sequence $E_k(\lda)$ converges to
${\mathfrak E}_1(\lda):=e^{-\lda}{\mathfrak E}(\lda)$ in the space
$L_\infty(\Lambda,+\infty)$ for sufficiently large $\Lambda$. Further,
\[ \left\lvert\frac 1{\Delta_{2m-1}} E_k(\eta\lda) - E_k(\lda) -
\sum\limits_{i=1}^{m-1}\frac{\Delta_{2i-1}}{\Delta_{2m-1}}\,e^{-g_i\eta\lda}E_k(\rho_i\eta \lda)\right\rvert \leq
 e^{-\lda}\mathfrak{F}_k,\]
where $\mathfrak{F}_k$ are tails of the series \eqref{estim_row}. Since this series converges uniformly
for $\lda > \Lambda$, we conclude that ${\mathfrak E}_1(\lda)$ satisfies the homogeneous equation 
\begin{equation}\label{F1_homogen_2}
\frac 1{\Delta_{2m-1}} \mathfrak{E}_1(\eta\lda) = \mathfrak{E}_1(\lda) +
\sum\limits_{i=1}^{m-1}\frac{\Delta_{2i-1}}{\Delta_{2m-1}}\,e^{-g_i\eta\lda}\mathfrak{E}_1(\rho_i\eta \lda), \quad \lda>\Lambda.
\end{equation}
As in the simplest case, for some $c>0$, $\varsigma \geq 1$ we have
\begin{equation}\label{F1_estim_2}
\lvert {\mathfrak E}_1(\lda) \rvert \leq c\, e^{-\varsigma\lda} \quad \mbox{for} \; \lda > \Lambda.
\end{equation}
From \eqref{F1_homogen_2} and \eqref{F1_estim_2} we obtain
\begin{multline*} 
\lvert {\mathfrak E}_1(\lda) \rvert = 
\left\lvert \frac 1{\Delta_{2m-1}} \mathfrak{E}_1(\eta\lda) -
\sum\limits_{i=1}^{m-1}\frac{\Delta_{2i-1}}{\Delta_{2m-1}}\,e^{-g_i\eta\lda}
\mathfrak{E}_1(\rho_i\eta \lda)\right\rvert\leq {} \\
{} \leq \frac 1{\Delta_{2m-1}}c\, e^{-\eta\varsigma\lda} + 
\sum\limits_{i=1}^{m-1}\frac{\Delta_{2i-1}}{\Delta_{2m-1}}\, 
c\, e^{-\eta(\rho_i\varsigma+g_i)\lda} \leq  \\
\leq \frac{2}{\Delta_{2m-1}}\, c\, e^{-(\varsigma+\delta)\lda} \leq  c\, e^{-\left(\varsigma+\delta-\frac{\ln(2/\Delta_{2m-1})}{\Lambda}\right)\lda}. 
\end{multline*}
Without loss of generality we can assume $\Lambda > \frac 2{\delta}\ln (\frac 2 {\Delta_{2m-1}})$. Then
 \[ \lvert {\mathfrak E}_1(\lda) \rvert \leq c\, e^{-(\varsigma+\frac{\delta}{2})\lda} \quad \mbox{for} \; \lda > \Lambda. \]

As in the simplest case, this gives ${\mathfrak E}_1(\lda)\equiv0$ for $\lda > \Lambda$, and the statement follows.
\end{proof}

\begin{remark}
It is easy to see that if we know the expansion \eqref{E_as_row_2} we can reconstruct the parameters of 
the function $C(t)$.
\end{remark}

Now we consider the case of the regular ladder.

\begin{theorem}\label{regular}
For a regular ladder the relation \eqref{E_as_row_2} is simplified and reads as follows:
\[ E(\lda) = H(\lda)\lda^\alpha e^\lda - (\frac{\Delta_2}{\Delta_1} + H(\lda)\lda^\alpha) + \sum_{k=1}^{+\infty}C_k e^{-k\lda}. \]
\end{theorem}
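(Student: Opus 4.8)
The plan is to specialize the general expansion \eqref{E_as_row_2} to the regular ladder, for which $\rho_i=\frac1m$ for all $i$ (so the hypothesis $\rho_m=\min\{\rho_i\}$ of the preceding theorem is met), $\Delta_{2k-1}=\Delta_1$, $\Delta_{2k}=\Delta_2$, $\eta=m$ and $\alpha=\log_m(\Delta_1)$, whence $m^\alpha=\Delta_1$. Two things must be checked: that the ``second term'' of \eqref{E_as_row_2} collapses to $\bigl(\frac{\Delta_2}{\Delta_1}+H(\lda)\lda^\alpha\bigr)$, and that the residual series reduces to $\sum_{k\ge1}C_k e^{-k\lda}$ with $\lda$-independent coefficients $C_k$. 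Convergence need not be re-established, as it is already furnished by \eqref{E_as_row_2}.

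For the second term I would substitute $\Delta_{2m-2}=\Delta_2$, $\Delta_{2m-3}=\Delta_{2m-1}=\Delta_1$, $\rho_{m-1}/\rho_m=1$ and $\rho_{m-1}=\frac1m$. Since $(\rho_{m-1}\lda)^\alpha=m^{-\alpha}\lda^\alpha=\lda^\alpha/\Delta_1$, one gets $\Delta_{2m-3}H\bigl(\tfrac{\rho_{m-1}}{\rho_m}\lda\bigr)(\rho_{m-1}\lda)^\alpha=\Delta_1 H(\lda)\lda^\alpha/\Delta_1=H(\lda)\lda^\alpha$, so the bracket becomes exactly $\frac{\Delta_2}{\Delta_1}+H(\lda)\lda^\alpha$.

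Next I would analyze the exponents. With $g_i=\frac{m-i}{m}$ and $\rho_i=\frac1m$ the starting set is $\mathfrak{I}_1=\{\eta g_k\}_{k=1}^{m-1}=\{1,\dots,m-1\}$, and the maps producing new exponents are $l_0^{-1}(l_i(\varsigma))=\varsigma+(m-i)$ for $i=1,\dots,m-1$ together with $l_0^{-1}(l_m(\varsigma))=m\varsigma$. All of these carry positive integers to positive integers, so every exponent that occurs is a positive integer; the minimal one, $\varsigma=1$, is precisely the extracted ``second term'', and the residual series runs over $\varsigma\ge2$, i.e. $e^{(1-\varsigma)\lda}=e^{-k\lda}$ with $k=\varsigma-1\ge1$.

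The main work, and the expected obstacle, is to show the coefficients are constant. The crucial structural fact is that, since $H$ is $1$-periodic in $\log_m\lda$ and $m^\alpha=\Delta_1$, the function $H_1(\lda):=H(\lda)\lda^\alpha$ satisfies $H_1(m\lda)=\Delta_1 H_1(\lda)$. In the regular case the recurrence for the coefficients reads $c_\varsigma(\lda)=c_\varsigma^1(\lda)+\frac1{\Delta_1}c_{\varsigma/m}(m\lda)-\sum_{i=1}^{m-1}c_{\varsigma-m+i}(\lda)$ (with $c_j\equiv0$ for $j\le0$ and the middle term present only when $m\mid\varsigma$), where $c_\varsigma^1(\lda)=-H_1(\lda)-\frac{\Delta_2}{\Delta_1}$ for $1\le\varsigma\le m-1$ and $c_\varsigma^1\equiv0$ otherwise. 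I would prove by induction that each $c_\varsigma(\lda)$ is affine in $H_1$, say $c_\varsigma(\lda)=A_\varsigma+B_\varsigma H_1(\lda)$ with constants $A_\varsigma,B_\varsigma$; this form is closed under the recurrence precisely because $H_1(m\lda)=\Delta_1H_1(\lda)$ absorbs the rescaling. It then suffices to show $B_\varsigma=0$ for $\varsigma\ge2$. The $B_\varsigma$ satisfy $B_\varsigma=B_\varsigma^{(1)}+[m\mid\varsigma]\,B_{\varsigma/m}-\sum_{i=1}^{m-1}B_{\varsigma-m+i}$ with $B_\varsigma^{(1)}=-1$ for $1\le\varsigma\le m-1$ and $0$ otherwise. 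Introducing $\beta(x)=\sum_{\varsigma\ge1}B_\varsigma x^\varsigma$ and $P(x)=x+\cdots+x^{m-1}$, this is equivalent to $\beta(x)\,(1+P(x))=\beta(x^m)-P(x)$; since $1+P(x)=\frac{1-x^m}{1-x}$ one verifies directly that $\beta(x)=-x$ is a solution, and as the relation determines each $B_\varsigma$ uniquely from lower-order terms it is the only formal power series solution. Hence $B_1=-1$ (the extracted second term) and $B_\varsigma=0$ for $\varsigma\ge2$, so $c_{k+1}(\lda)\equiv A_{k+1}=:C_k$ are constants, yielding the asserted form. This vanishing of the $H_1$-part — the generalization of ``$D_k=0$'' from the $m=2$ case — is the heart of the argument.
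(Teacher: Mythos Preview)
Your argument is correct, but the paper proceeds by a different and shorter device. Rather than specializing the general recurrence for $c_\varsigma$ and tracking the $H_1$-component through a generating-function identity, the paper changes the ansatz: it writes $E(\lda)=H_1(\lda)(e^\lda-1)+e^\lda E_1(\lda)$, factoring out $(e^\lda-1)$ instead of $e^\lda$. Because of the regular-ladder identity $E(m\lda)=\Delta_1\dfrac{e^{m\lda}-1}{e^\lda-1}\,E(\lda)+\Delta_2\dfrac{e^{(m-1)\lda}-1}{e^\lda-1}\,e^\lda$ together with $H_1(m\lda)=\Delta_1 H_1(\lda)$, the $H_1$-terms cancel \emph{exactly} upon substitution, and the resulting relation $\dfrac{1}{\Delta_1}E_1(m\lda)=\bigl(1+\sum_{j=1}^{m-1}e^{-j\lda}\bigr)E_1(\lda)-\mathfrak P_1(\lda)$ has $\mathfrak P_1(\lda)=\dfrac{\Delta_2}{\Delta_1}\sum_{j=1}^{m-1}e^{-j\lda}$ free of $H$. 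Hence $H$ can never re-enter, and constancy of all later coefficients is immediate; this is the same $(e^\lda-1)$ trick already used to obtain the explicit $\tilde\Phi$ and its Fourier coefficients.

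Your route has its own merits: it works directly inside the general framework of the preceding theorem without introducing a new $E_1$, makes explicit that the exponent set is exactly $\mathbb Z_{>0}$, and reduces the vanishing of the $H_1$-parts to the pleasant closed-form $\beta(x)=-x$ of the functional equation $\beta(x)\dfrac{1-x^m}{1-x}=\beta(x^m)-\dfrac{x-x^m}{1-x}$. The paper's approach is quicker because the cancellation happens once, at the level of the functional equation, rather than term-by-term in the series.
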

\begin{proof}
We slightly change the definition of $E_1(\lda)$:
\[ E(\lda) = H_1(\lda)(e^\lda-1)+e^\lda E_1(\lda). \]
Then the relation \eqref{main_eq_E_1} becomes
\begin{equation*}
\frac 1{\Delta_1}E_1(m\lda) = E_1(\lda) + \sum\limits_{j=1}^{m-1}e^{-j\lda}E_1(\lda) - \mathfrak{P}_1(\lda),
\end{equation*}
\begin{equation*}
\mathfrak{P}_1(\lda) = \frac{\Delta_2}{\Delta_1}\sum\limits_{j=1}^{m-1} e^{-j\lda}.
\end{equation*}
The function $H(\lda)$ is absent in this relation. Therefore it cannot arise in subsequent terms of the asymptotics.
\end{proof}

\subsection{The ladders with a critical point}

If the assumption $\rho_m =\min\{\rho_i\}$ is not satisfied we can in general give only asymptotic
expansion for $E(\lda)$.

The assumption $\rho_m =\min\{\rho_i\}$ was used only in the development of the relation
$\eta(g_i+\rho_i\varsigma_k) > \varsigma_k$. In general case this relation becomes the inequality
\[ \varsigma_k < \dfrac{g_i}{\rho_m-\rho_i} \]
for all $i$ such that $\rho_i < \rho_m$. We call the number
$\varsigma^o=\min\limits_{i:\;\rho_i<\rho_m}\dfrac{g_i}{\rho_m-\rho_i}$ \textit{the critical point} 
of generalized Cantor ladder. Note that $\varsigma^o>1$.

It is clear that we can separate new terms until $\varsigma_k < \varsigma^o$, and not all
$c_\varsigma(\lda)$ with $\varsigma_k < \varsigma < \varsigma^o$ vanish. Note that the first condition
is stable: if $\varsigma_k < \varsigma^o$ then
\[ \varsigma_{k+1} = \min\{\eta\varsigma_k, \eta(g_i+\rho_i\varsigma_k), \varsigma_k''\} <
\eta\min\{g_i+\rho_i\varsigma^o\} \le \varsigma^o.
\]

Unfortunately, vanishing of all $c_\varsigma(\lda)$ for $\varsigma_k < \varsigma < \varsigma^o$
is possible though in a somewhat degenerate case. For example, one can consider the classical Cantor 
ladder with two steps of the width $\frac{1}{3}$ but define it in an alternative way. Namely,
consider a ladder with three steps: $I_1=[0,\frac{1}{9}]$, $I_2=[\frac{2}{9},\frac{1}{3}]$, 
$I_3=[\frac{2}{3},1]$, $\rho_1=\rho_2=\frac{1}{4}$, $\rho_3=\frac{1}{2}$. This gives the same classical
ladder with the same asymptotics of $E(\lda)$. But for this definition the ladder has a critical point
 $\varsigma^o=2$. Since this critical point cannot be a concentration point for the exponents, all 
$c_\varsigma(\lda)$ for $\varsigma_k < \varsigma < \varsigma^o$ should vanish for some $k$.

For completeness, we give an example of a ladder with non-vanishing sequence of $c_\varsigma(\lda)$. Let
$\lvert I_1 \rvert = \lvert I_2 \rvert = \Delta<\frac 12$, $\rho_1 < \rho_2 < \frac{1}{\sqrt{2}}$; 
for example, set $\rho_1 = \frac{1}{3}$, $\rho_2 = \frac{2}{3}$. For this ladder the relation 
\eqref{main_eq_E_1} becomes
\[ E_1(\lda) = \Delta e^{-\rho_2\lda}E_1(\rho_1\lda)+\Delta E_1(\rho_2\lda)+c_1(\lda)e^{-\rho_2\lda}, \]
\[ c_1(\lda) = \Delta H_1(\rho_1\lda)+(1-2\Delta). \]
Taking the next term away we arrive at
\[ E_2(\lda) = \Delta e^{-\rho_2\lda}E_2(\rho_1\lda)+\Delta E_2(\rho_2\lda)+c_2(\lda)e^{-\lda}, \]
\[ c_2(\rho_2\lda) = \frac 1{\Delta} c_{1}(\lda) - c_{1}(\rho_1\lda).  \]

If $c_{2}(\lda) = 0$ then $c_1(\lda)$ should have the form
\begin{equation}\label{c1}
c_1(\lda) = \lda^{\alpha'}\Phi'(\log_{\rho_1}(\lda)),
\end{equation}
where $\alpha' = -\log_{\rho_1}(\Delta)$ while $\Phi'$ is a $1$-periodic function. From another side,
\begin{equation}\label{c1'}
c_1(\lda) = \Delta(\rho_1\lda)^{\alpha}\Phi(-\log_{\rho_2}(\rho_1\lda))+(1-2\Delta),
\end{equation}
where $\alpha = -\log_{\rho_2}(\Delta)$ and $\Phi$ is $1$-periodic.
It is easy to see that \eqref{c1} and \eqref{c1'} are asymptotically incompatible, since $1-2\Delta\neq 0$.

In the subsequent steps we have a unique term with exponent less then $\rho_m\varsigma^o$. Corresponding
coefficients satisfy $c_{k+1}(\lda) = -c_k(\rho_1\lda)$. Therefore, $c_\varsigma(\lda)$ cannot vanish
all together, and the asymptotic expansion has infinitely many terms.

This situation is shown at the Figure~\ref{img2}. One can see the intersection of graphs of 
$l_0(\varsigma)$ and $l_1(\varsigma)$ providing the concentration point, the sequence of exponents 
tending to this point, and an exponent greater then $\varsigma^o$, which cannot arise in our asymptotic
expansion.

\begin{figure}[h]
\begin{center}
\begin{picture}(240,200)
\put(0,10){\vector(1,0){240}}%
\put(10,0){\vector(0,1){200}}%
\put(10,10){\line(1,1){188}}%
\put(124,135){$l_2$}%
\put(10,10){\line(3,2){228}}%
\put(115,70){$l_0$}%
\put(10,70){\line(3,1){228}}%
\put(54,90){$l_1$}%
\put(100,9){\line(0,1){2}}%
\put(97.5,0){$\varsigma_1$}%
\put(145,9){\line(0,1){2}}%
\put(142.5,0){$\varsigma_2$}%
\put(168,9){\line(0,1){2}}%
\put(165.5,0){$\varsigma_3$}%
\put(180,9){\line(0,1){2}}%
\put(177.5,0){$\varsigma_4$}%
\put(190,9){\line(0,1){2}}%
\put(187.5,0){$\varsigma^o$}%
%\put(9,70){\line(1,0){2}}%
%\put(0,68){$\varsigma_1$}%
\put(9,100){\line(1,0){2}}%
\put(2,96){$1$}%
%\put(9,115){\line(1,0){2}}%
%\put(0,113){$\varsigma_3$}%
%\put(9,123){\line(1,0){2}}%
%\put(0,121){$\varsigma_4$}%
%\multiput(10,70)(3,0){30}{\line(1,0){2}}%
\multiput(100,10)(0,3){30}{\line(0,1){2}}%
\multiput(100,100)(3,0){15}{\line(1,0){2}}%
\multiput(145,100)(0,3){15}{\line(0,1){2}}%
\multiput(145,145)(3,0){30}{\line(1,0){2}}%
\multiput(145,115)(3,0){8}{\line(1,0){2}}%
\multiput(168,115)(0,3){3}{\line(0,1){2}}%
\multiput(168,123)(3,0){4}{\line(1,0){2}}%
\multiput(180,123)(0,3){1}{\line(0,1){2}}%
\multiput(180,126.5)(3,0){2}{\line(1,0){2}}%
\multiput(190,12)(0,6){20}{\line(0,1){4}}%
\end{picture}
\caption{The sequence of exponents $\varsigma_k$ for a ladder with a critical point}
\label{img2}
\end{center}
\end{figure}
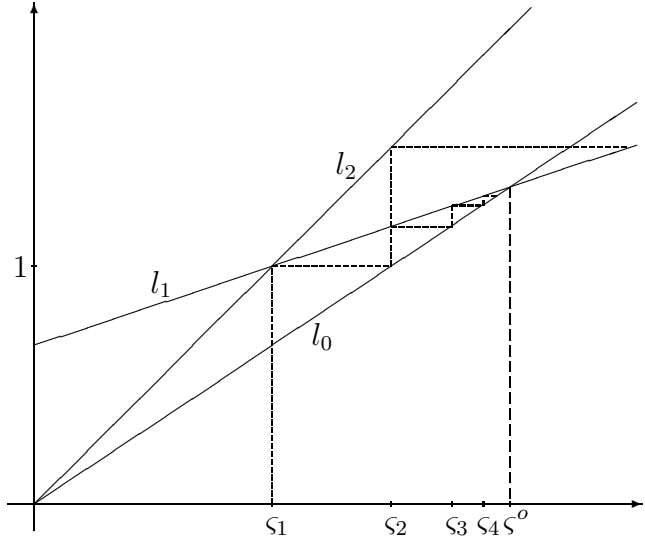

\begin{theorem}
Let a ladder have a critical point. Then the function $E(\lda)$ can be expanded into the asymptotic sum
\begin{multline*}
 E(\lda) = H(\lda)\lda^\alpha e^\lambda-\left(\frac{\Delta_{2m-2}}{\Delta_{2m-1}}
+\Delta_{2m-3}H\left(\frac{\rho_{m-1}}{\rho_m}\lda\right)\left(\rho_{m-1}\lda\right)^{\alpha}\right)+\\
+\sum_{\varsigma\in \mathfrak{I}'} c_\varsigma(\lda)e^{(1-\varsigma)\lda} + O(e^{(1-\varsigma')\lda}),
\end{multline*}
for any given $\varsigma'<\varsigma^o$. All elements of $\mathfrak{I}'$ satisfy the inequality 
$1<\varsigma<\varsigma'$.

If the coefficients $c_\varsigma(\lda)$ for $\varsigma < \varsigma^o$ do not vanish all together, this
sum can have arbitrarily many terms.
\end{theorem}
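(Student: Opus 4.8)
The plan is to re-run the separation algorithm developed for the case $\rho_m=\min\{\rho_i\}$, but to stop it just below the critical point $\varsigma^o$. The guiding idea is that the geometric obstruction — the concentration of exponents — occurs \emph{exactly} at $\varsigma^o$, so that below any fixed $\varsigma'<\varsigma^o$ only finitely many exponents can appear, and the whole scheme transcribes verbatim up to order $\varsigma'$. First I would record that the inductive step of the algorithm remains valid as long as the current exponent stays below $\varsigma^o$: the only place where $\rho_m=\min\{\rho_i\}$ entered was the inequality $\eta(g_i+\rho_i\varsigma_k)>\varsigma_k$, which in the general case is equivalent to $\varsigma_k<g_i/(\rho_m-\rho_i)$ for every $i$ with $\rho_i<\rho_m$, i.e. to $\varsigma_k<\varsigma^o$. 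Since this condition is stable under the iteration, as already observed above, every term separated while $\varsigma_k<\varsigma^o$ obeys the same estimates as before, and the coefficients $c_\varsigma(\lda)$ satisfy the same recurrence, hence the same bound $|c_\varsigma(\lda)|\le C_1C_2^\varsigma$.

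The heart of the proof is a finiteness statement: for any fixed $\varsigma'<\varsigma^o$ there are only finitely many exponents $\varsigma\in\mathfrak{I}$ with $\varsigma<\varsigma'$, and the algorithm separates all of them in finitely many steps. I would prove this through the dynamics of the generating maps $\varsigma\mapsto l_0^{-1}(l_i(\varsigma))$. For $i=m$, and for those $i<m$ with $\rho_i\ge\rho_m$, these maps are expanding and push exponents to $+\infty$; for $i<m$ with $\rho_i<\rho_m$ the map $\varsigma\mapsto\eta(g_i+\rho_i\varsigma)$ is a contraction with attracting fixed point $g_i/(\rho_m-\rho_i)\ge\varsigma^o$. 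Since every positive fixed point is therefore $\ge\varsigma^o$, on the interval $[1,\varsigma']$ each map exceeds the identity by at least a fixed amount $\delta>0$ (for the critical contraction, by $(1-\eta\rho_i)(\varsigma^o-\varsigma')$; for the expanding maps, by $\eta g_i$ or $\eta-1$). Because all the maps are increasing and strictly exceed the identity on $(0,\varsigma^o)$, any exponent $\le\varsigma'$ is produced by a composition whose intermediate values all lie in $[1,\varsigma']$; each factor raises the value by at least $\delta$, so the length of the composition is at most $(\varsigma'-1)/\delta$. As there are only finitely many compositions of bounded length starting from the finite initial set $\mathfrak{I}_1$, only finitely many exponents fall below $\varsigma'$. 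The very same uniform gap $\delta$ shows that the decay rate $\varsigma_k$ of the remainder increases by at least $\delta$ at each step while $\varsigma_k\le\varsigma'$, so after finitely many, say $K$, steps one reaches $\varsigma_K>\varsigma'$.

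From here the expansion follows at once. After $K$ steps the first separated term reproduces the explicit second term of the statement (the one with exponent $1$), the remaining separated terms of exponent below $\varsigma'$ form the finite sum $\sum_{\varsigma\in\mathfrak{I}'}c_\varsigma(\lda)e^{(1-\varsigma)\lda}$, and the leftover satisfies $E_K(\lda)=O(e^{-\varsigma_K\lda})=O(e^{-\varsigma'\lda})$, so that returning to $E(\lda)$ gives the remainder $e^\lda E_K=O(e^{(1-\varsigma')\lda})$. The final clause on arbitrarily many terms I would obtain by letting $\varsigma'\uparrow\varsigma^o$: the finiteness argument breaks down precisely at $\varsigma^o$, since the critical contraction has its fixed point there, so the exponents $\varsigma_k$ accumulate at $\varsigma^o$ and there are infinitely many of them in $(1,\varsigma^o)$. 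If the coefficients $c_\varsigma(\lda)$ do not all vanish — as in the explicit example above, where the recurrence $c_{k+1}(\lda)=-c_k(\rho_1\lda)$ forbids simultaneous vanishing — the number of nonzero terms grows without bound.

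The main obstacle is the finiteness/uniform-gap step: one must rule out any accumulation of exponents strictly below $\varsigma^o$, which amounts to the dynamical fact that all positive fixed points of the maps $l_0^{-1}\circ l_i$ are $\ge\varsigma^o$. Everything else is a direct transcription of the two preceding proofs, now truncated at finite order; in particular, because the expansion stops at a fixed $\varsigma'<\varsigma^o$, no delicate convergence-of-the-full-series or vanishing-of-the-remainder argument is required.
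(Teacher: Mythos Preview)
Your proposal is correct and follows exactly the route the paper takes: the theorem is stated without a separate proof, the preceding discussion in \S3.4 being the argument --- namely, that the separation scheme of \S3.3 runs verbatim while $\varsigma_k<\varsigma^o$, that this condition is stable under iteration, and that the exponents accumulate only at $\varsigma^o$ (Figure~\ref{img2}). Your dynamical finiteness argument (no positive fixed point of $l_0^{-1}\circ l_i$ lies below $\varsigma^o$, hence a uniform gap $\delta$ on $[1,\varsigma']$) makes rigorous what the paper conveys through the picture and otherwise leaves to the reader.
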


We are grateful to E.A.~Gorin who pointed out the paper \cite{G}, and to I.A.~Sheipak for some useful
remarks. Our work was supported by RFBR grant 10-01-00154a and by St.Petersburg State University grant 
N6.38.64.2012. The second author was also supported by the Chebyshev Laboratory under the grant
11.G34.31.0026 of the Government of the Russian Federation.\medskip

\end{document}